\documentclass[11pt]{amsart}
\usepackage{geometry}                % See geometry.pdf to learn the layout options. There are lots.
\geometry{letterpaper}                   % ... or a4paper or a5paper or ... 
\usepackage{graphicx}
\usepackage{amssymb}
\usepackage{epstopdf}
\DeclareGraphicsRule{.tif}{png}{.png}{`convert #1 `dirname #1`/`basename #1 .tif`.png}

\newcommand{\Sp}{{\rm{Sp}}(2n,\mathbb Z)}
\newcommand{\SL}{{\rm{SL}}}

\newcommand{\Z}{\mathbb Z}
\newcommand{\R}{\mathbb R}
\renewcommand{\S}{\mathbb S}
\newcommand{\Q}{\mathbb Q}
\newcommand{\C}{\mathbb C}
\newcommand{\GG}{\mathbb G}
\newcommand{\G}{\Gamma}
\newcommand{\GA}{\G({\mathfrak a})}

\newcommand{\Oo}{\mathfrak o}
\newcommand{\A}{\mathfrak a}

\renewcommand{\P}{\frak p}
\newcommand{\onto}{\twoheadrightarrow}

\newtheorem{theorem}{Theorem}[section]
\newtheorem{corollary}[theorem]{Corollary}

\newtheorem{proposition}[theorem]{Proposition}

\newtheorem*{question}{Question}

\title[Arithmetic group actions on spheres and acyclic manifolds]{Actions of arithmetic groups on  homology spheres and acyclic homology manifolds}

% author one information
\author[Bridson]{Martin R.~Bridson}
\address{Martin R.~Bridson\\
Mathematical Institute \\
24-29 St Giles' \\
Oxford OX1 3LB \\
U.K. }
\email{bridson@maths.ox.ac.uk}

% author two information
\author[Grunewald]{Fritz Grunewald}

% author three information
\author[Vogtmann]{Karen Vogtmann}
\address{Karen Vogtmann\\
Department of Mathematics\\
Cornell University\\
Ithaca NY 14853 }
\email{vogtmann@math.cornell.edu}

%\date{Berlin, 28 June 2012}%Atlantic Ocean, 12 May; PCMI 12 July . Obwerolfach to Friday 7 June 2013, Oxford

\thanks{Bridson is supported by an EPSRC Senior Fellowship and a Royal Society Wolfson
Research Merit Award.
Vogtmann is supported by NSF grant DMS-0204185. }

\subjclass{57S25, 53C24, 20F65, 20G30}

\keywords{Arithmetic groups, Smith theory, acyclic manifolds}

\begin{document}                                        

\begin{abstract}
We establish lower bounds on the dimensions in which arithmetic groups with torsion can act on 
acyclic manifolds and homology spheres. The bounds rely on the existence of elementary $p$-groups
in the groups concerned. In some cases, including $\Sp$, the bounds we obtain are sharp:
if $X$ is a generalized $\Z/3$-homology sphere of dimension  less than $2n-1$ or a $\Z/3$-acyclic $\Z/3$-homology manifold of dimension less than  $2n$, and if 
$n\geq 3$, then any action of $\Sp$ by homeomorphisms on $X$ is trivial; if $n=2$, then every
action of $\Sp$ on $X$ factors through the abelianization of ${\rm{Sp}}(4,\Z)$, which is  $\Z/2$.
\end{abstract}

\maketitle

\section{Introduction}

The group ${\rm{SL}}(n,\Z)$ acts faithfully by linear transformations on $\R^n$ and hence on the sphere at infinity $\S^{n-1}$,
but if $n\ge 3$ then it cannot act on  lower-dimensional spheres or Euclidean spaces. 
Indeed,  in \cite{BV}, Bridson and Vogtmann\begin{footnote}{The proof of a
similar theorem announced earlier  \cite{Parwani} is not valid.}\end{footnote} proved that if $n\ge 3$ and $d<n$, then
${\rm{SL}}(n,\Z)$ cannot act non-trivially by homeomorphisms on any contractible manifold of dimension $d$, nor on any homology sphere
of dimension $d-1$. This is an immediate corollary of a more general theorem proved in \cite{BV}: if $n\ge 3$ then
${\rm{SAut}}(F_n)$, the unique subgroup of index $2$ in the automorphism group of the free group $F_n$, cannot act
non-trivially by homeomorphisms 
 on any $\Z/2$-acyclic $\Z/2$-homology manifold of dimension less than $n$, nor any
 generalized $\Z/2$-homology sphere of dimension less than
$n-1$.  
The proof revolved around the elementary abelian $2$-groups in ${\rm{SAut}}(F_n)$. Using Smith theory, one
argues that one of a small number of involutions have to act trivially; a detailed analysis of the quotients
of ${\rm{SAut}}(F_n)$ by these involutions then allows one to conclude that the entire group has to act trivially.

Smith theory applies for any prime $p$. When $p$ is odd there is a stronger restriction on the dimension of a sphere on which an elementary abelian $p$-subgroup can act.  If $n$ is even, then ${\rm{SAut}}(F_n)$ contains an elementary
3-group of rank $n/2$, and by exploiting this one can give a considerably shorter proof of the fact that
${\rm{SAut}}(F_{2m})$  (and hence ${\rm{SL}}(2m,\Z)$) cannot act on an acyclic manifold of dimension less than $2m$
or a $\Z/3$-homology sphere of dimension less than $2m-1$ (see \cite{BV}, Theorem 4.9). 
The purpose of the
present note is to point out that the strategy of this shorter proof, combined with  ideas of 
Zimmermann \cite{Z1},  
can be used to establish similar theorems for many  arithmetic groups. In most cases, this will give only a lower bound on the dimension 
of spheres and acyclic spaces where the
group can act. But in good cases, where one has a suitably large elementary abelian $p$-subgroup for some $p$,
the bounds that one obtains can be sharp. 

One such case is that of the symplectic groups. In this case,  we prove that for $n>2$ any action of ${\rm Sp}(2n,\Z)$ on  a generalized $\Z/3$-homology sphere of dimension  less than $2n-1$ or a $\Z/3$-acyclic $\Z/3$-homology manifold of dimension less than  $2n$ must be trivial. If  $n=2$ any such action factors through the abelianization of ${\rm{Sp}}(4,\Z)$, which is $\Z/2$.  
The dimension bound is sharp,
since ${\rm{Sp}}(2n,\Z)$ does act (linearly) on $\R^{2n}$ and $\S^{2n-1}$.
This result, with a weaker conclusion for $n=2$, was obtained independently by Zimmermann (see \cite{Z}).

Our investigations are motivated by the following general question:
\begin{question} Given an affine algebraic group scheme $\GG$ defined over $\Z$ and a ring $\Oo_S$ of $S$-integers  in a number field $k,$ what is the smallest dimensional sphere (or Euclidean space) on which $\GG(\Oo_S)$ can act? In particular, are there examples of actions below the
first dimension in which  $\GG(\Oo_S)$ has a non-trivial linear representation over $\R$?
\end{question}

The bounds that we establish depend on the torsion in the groups $\GG(\Oo_S)$ and are summarized in Theorem~\ref{t:perfect}. 
 In Section \ref{s:sp} we show how everything works in the concrete setting of the symplectic group ${\rm{Sp}}(2n,\Z)$.  In Section \ref{s:lemma} we point out that the basic features of the argument work for much more general arithmetic  groups, and a slightly weaker hypothesis may be used.  We emphasize here, however, that this general argument does not eliminate the need for special arguments in low-dimensional cases, even for the symplectic group.  

In Section \ref{s:lattices} we discuss conditions on linear algebraic groups $G$ and rings of $S$-integers $\Oo_S$ that are sufficient to ensure that
the lattices $\G=G(\Oo_S)$ have the features needed for our general argument. The lower bounds on the dimension in which such groups $\Gamma$
have interesting actions depend on the existence of elementary $p$-subgroups; the larger
the rank of such a subgroup, the better the bounds one obtains.
We provide some elementary examples of such subgroups, but we leave open the question of how
to identify the largest such subgroup in general. 

\medskip

\noindent{\em{Acknowledgements.}} We thank Alex Lubotzky, Gopal Prasad and Alan Reid  for their helpful comments concerning the
material in Section 4. Most particularly, we thank Dan Segal for his notes on this material, from which we borrowed heavily. We also
thank the Institute Mittag-Leffler (Djursholm, Sweden) for its hospitality during the preparation of this manuscript.

Tragically, the second author did not survive to see this project completed. He is sorely missed for many reasons. 
Any deficiencies in the final version of this paper are the responsibility of the first and third authors alone. 

\section{The integral symplectic group}\label{s:sp}
The symplectic group ${\rm{Sp}}(2n,\mathbb Z)$ is the subgroup of ${\rm{GL}}(2n,\mathbb Z)$  consisting of matrices which preserve the standard symplectic form.  With respect to a symplectic basis $$\{a_1,\ldots,a_n,b_1,\ldots, b_n\}$$ 
for $V=\Z^{2n}$, the form is represented by the matrix 
$$
J=\begin{pmatrix} 
      0 & I \\
      -I&0\\
 \end{pmatrix}
$$
 and an integer matrix $M$ lies in   $\Sp$ if and only if  $^tMJM=J$.

 There is a subgroup $T$ of $\Sp$ isomorphic to $(\Z/3)^n$, generated by the symplectic transformations
 $$R_i\colon
 \begin{cases}
a_i\mapsto -b_i  \\
  b_i\mapsto a_i-b_i\\
  a_k\mapsto a_k \hbox{ for } k\neq i\\
  b_k\mapsto b_k \hbox{ for } k\neq i\\
  \end{cases}
  $$
  If $V_i$ is the subspace with basis $\{a_i,b_i\}$, then $R_i$ is the identity on $V^\perp_i\subset\R^{2n}$  and the matrix of $R_i$ restricted to  $V_i$   is 
$
\begin{pmatrix} 
      0& 1\\
      -1&{-1}\\
 \end{pmatrix}.
$
We remark that $T$ is also a subgroup of $\SL(2n,\Z),$ and is the same subgroup as was  used in \cite{BV}, Lemma  3.2. 

We now recall the statement of Theorem 4.7 of  \cite{BV}  for odd primes $p$, which is proved using Smith Theory.

\begin{theorem}\label{Smith}  Let $p$ be an odd prime.   If $m < 2d-1$, then $(\Z/p)^d$ cannot act effectively
by homeomorphisms on a generalized  $\Z/p$-homology sphere of dimension  $m$ or a $\Z/p$-acyclic $\Z/p$-homology manifold of dimension $m+1$.   
\end{theorem}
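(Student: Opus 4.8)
The plan is to reduce both halves of the statement to a single assertion about acyclic manifolds and then prove that by induction on $d$, with Smith theory supplying the geometric input at each stage. First I would link the two cases by coning: if $X$ is a generalized $\Z/p$-homology sphere of dimension $m$, its open cone $cX$ is a $\Z/p$-acyclic $\Z/p$-homology manifold of dimension $m+1$, and any action of $(\Z/p)^d$ on $X$ extends radially to $cX$, fixing the cone point and remaining effective. Hence an effective action on a sphere of dimension $m$ produces an effective action on a $\Z/p$-acyclic homology manifold of dimension $m+1$, and it suffices to prove the single statement: if $(\Z/p)^d$ acts effectively on a $\Z/p$-acyclic $\Z/p$-homology manifold $Y$ of dimension $n$, then $n\ge 2d$. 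Applying this with $n=m+1$ gives $m\ge 2d-1$, contradicting $m<2d-1$ in both cases at once.

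I would prove this by induction on $d$, the case $d=0$ being immediate since a $\Z/p$-acyclic space is nonempty, so $n\ge 0$. The two Smith-theoretic facts I would invoke are: for any $C\cong\Z/p$ acting on a $\Z/p$-acyclic $\Z/p$-homology manifold $Y$, the fixed set $Y^{C}$ is again a $\Z/p$-acyclic $\Z/p$-homology manifold (in particular nonempty and connected); and, crucially for \emph{odd} $p$, that $\dim Y^{C}\equiv n \pmod 2$. Effectiveness means that every nontrivial $g$ acts as a nonidentity homeomorphism, so $Y^{\langle g\rangle}$ is a proper subspace, whence $\dim Y^{\langle g\rangle}\le n-1$; combined with the parity constraint this forces $\dim Y^{\langle g\rangle}\le n-2$. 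Choosing any $C\cong\Z/p$ in $A=(\Z/p)^d$, the quotient $A/C\cong(\Z/p)^{d-1}$ acts on $Y^{C}$, and $\dim Y^{C}\le n-2$. If this induced action is effective, the inductive hypothesis yields $\dim Y^{C}\ge 2(d-1)$, so that $n-2\ge 2d-2$ and therefore $n\ge 2d$, as required. The same parity argument, applied to a homology sphere directly, also shows that every proper fixed set drops in dimension by at least two, which is the mechanism producing the factor of $2$ and thus the sharp bound $2d-1$.

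The main obstacle is precisely the hypothesis italicized above: that $A/C$ acts \emph{effectively} on the fixed subspace $Y^{C}$. Effectiveness of $A$ on $Y$ does not guarantee it, since an element $a\notin C$ may act as a nontrivial homeomorphism of $Y$ while fixing the smaller subspace $Y^{C}$ pointwise. Resolving this is the heart of the proof, and I would handle it in one of two equivalent ways. Either one inducts along a carefully chosen flag $C\subset\langle C,\cdot\rangle\subset\cdots\subset A$, selecting at each stage a line whose fixed set is as small as possible so that no further element becomes trivial on it; or, more cleanly, one replaces the stepwise induction by Borel's formula $n-\dim Y^{A}=\sum_{H}(\dim Y^{H}-\dim Y^{A})$, summed over the corank-one subgroups $H$, and observes that effectiveness forces the ``singular'' hyperplanes to exhaust all $d$ independent directions in $\mathrm{Hom}(A,\Z/p)$, each contributing at least $2$ to the total codimension and hence $n-\dim Y^{A}\ge 2d$. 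I expect this bookkeeping — converting the group-theoretic condition of effectiveness into a numerical lower bound on total codimension — to be the delicate step, and I would emphasize that it must be carried out cohomologically via Smith theory, since an action by homeomorphisms on a homology manifold provides no genuine tangent representation at a fixed point to which naive character-counting could be applied.
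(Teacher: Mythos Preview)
The paper does not give its own proof of this statement: it is explicitly recalled as Theorem~4.7 of \cite{BV} and then used as a black box, so there is no argument in the present paper to compare yours against.

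For what it is worth, your outline is the standard Smith-theoretic proof and is sound. The coning reduction from homology spheres to acyclic homology manifolds is correct, and the parity constraint for odd $p$ is precisely what forces the codimension drop of at least $2$ at each stage. You have also correctly located the only real difficulty: effectiveness of $A$ on $Y$ does not automatically descend to effectiveness of $A/C$ on $Y^{C}$. Of your two proposed resolutions, the Borel-formula route is the clean one. The point to make precise is that if the index-$p$ subgroups $H$ with $\dim Y^{H}>\dim Y^{A}$ failed to intersect in $\{0\}$, any nontrivial $a$ in their common intersection would, by a \emph{second} application of Borel's formula (this time to $A$ acting on $Y^{a}$), satisfy $\dim Y^{a}=n$ and hence $Y^{a}=Y$, contradicting effectiveness. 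Thus at least $d$ such $H$ contribute, each by at least $2$, giving $n-\dim Y^{A}\ge 2d$. Your flag variant can be made to work as well (choose $C$ with $\dim Y^{C}$ minimal), but verifying that the quotient action stays effective again comes down to the same Borel identity, so the two approaches are not really independent.
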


For $T\cong (\Z/3)^n$ this says: 

\begin{corollary}\label{trivial} Whenever $T$ acts by homeomorphisms on a generalized $\Z/3$-homology sphere of dimension less than  $2n-1$ or a 
$\Z/3$-acyclic $\Z/3$-homology manifold of dimension less than $2n$,  some non-trivial element of $T$ acts trivially.  
\end{corollary}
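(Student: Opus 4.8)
The plan is to derive this statement as a direct specialization of Theorem~\ref{Smith}, taking the odd prime to be $p = 3$ and the rank to be $d = n$, and then to reinterpret the non-existence of an \emph{effective} action as the assertion that some non-trivial element must act trivially. Since $T \cong (\Z/3)^n$ by construction, no new algebra concerning $T$ is required; the entire content is a matching of parameters.

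First I would recall that an action of a group $G$ by homeomorphisms on $X$ is effective precisely when the associated homomorphism $G \to \mathrm{Homeo}(X)$ is injective. Consequently, the statement that $(\Z/3)^n$ admits no effective action on a given $X$ is logically equivalent to the statement that, for every action of $(\Z/3)^n$ on $X$, the kernel of $G \to \mathrm{Homeo}(X)$ is non-trivial, i.e.\ some non-identity element of the group acts as the identity. It therefore suffices to verify that the two families of spaces named in the corollary are exactly those to which Theorem~\ref{Smith} applies with $p = 3$ and $d = n$.

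For this I would match the dimension bounds carefully. In the sphere case, the hypothesis is that $X$ is a generalized $\Z/3$-homology sphere of dimension $m$ with $m < 2n - 1$; since $2n - 1 = 2d - 1$, this is precisely the inequality $m < 2d - 1$ of Theorem~\ref{Smith}, so no effective action of $(\Z/3)^n$ exists. In the acyclic case, the hypothesis is that $X$ is a $\Z/3$-acyclic $\Z/3$-homology manifold of dimension less than $2n$; writing this dimension as $m+1$, the bound $m + 1 < 2n$ gives $m < 2n - 1 = 2d - 1$, which is again the hypothesis of Theorem~\ref{Smith} for the manifold of dimension $m+1$. In both cases Theorem~\ref{Smith} forbids an effective action, which by the reformulation above yields the conclusion that some non-trivial element of $T$ acts trivially.

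The only point requiring care—and hence the only potential obstacle—is the bookkeeping of the off-by-one shift between the dimension of the homology sphere and that of the acyclic homology manifold, together with the translation of ``cannot act effectively'' into the existence of a non-trivially acting kernel element. There is no geometric or homological work beyond what Theorem~\ref{Smith} already supplies.
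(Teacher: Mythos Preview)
Your proposal is correct and matches the paper's own treatment: the paper introduces Corollary~\ref{trivial} with the phrase ``For $T\cong (\Z/3)^n$ this says:'' immediately after Theorem~\ref{Smith}, indicating it is exactly the specialization $p=3$, $d=n$ together with the tautological reformulation of ``not effective'' that you spell out. Your explicit dimension bookkeeping is more than the paper provides, but the approach is identical.
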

  
We shall use the structure of ${\rm{Sp}}(2n,\Z)$ to deduce Theorem \ref{t:sp} from Corollary \ref{trivial}.
The case $n=2$ is special: for $n\ge 3$ the group
${\rm{Sp}}(2n,\Z)$ is perfect, but ${\rm{Sp}}(4,\Z)$ maps onto ${\rm{Sp}}(4,2)$, which is 
isomorphic to the symmetric group $\Sigma_6$ and hence maps onto $\Z/2$, which acts non-trivially on the
line and the $0$-sphere.

\begin{theorem}\label{t:sp} Let $X$ be a generalized $\Z/3$-homology sphere of dimension  less than $2n-1$ or a $\Z/3$-acyclic $\Z/3$-homology manifold of dimension less than  $2n$.  
If $n\geq 3$, then any action of $\Sp$ by homeomorphisms on $X$ is trivial. If $n=2$, then every
action of $\Sp$ on $X$ factors through the abelianization of ${\rm{Sp}}(4,\Z)$, which is  $\Z/2$.
\end{theorem}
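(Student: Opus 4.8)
The plan is to combine Corollary~\ref{trivial} with an analysis of the normal closures of non-trivial elements of $T$. By Corollary~\ref{trivial}, any action of $T$ on such an $X$ has a non-trivial element $t\in T$ in its kernel; since the kernel of the $\Sp$-action is normal, the entire normal closure $\la t\ra$ acts trivially, and the action factors through $\Sp/\la t\ra$. The theorem therefore reduces to the purely algebraic assertion that for every non-trivial $t\in T$ the quotient $\Sp/\la t\ra$ is abelian: when $n\geq 3$ the group $\Sp$ is perfect, so an abelian quotient is trivial and $\la t\ra=\Sp$; when $n=2$ an abelian quotient is a quotient of $\Sp^{\mathrm{ab}}=\Z/2$, which is exactly the asserted conclusion.

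To prove that $\Sp/\la t\ra$ is abelian I would first pin down $t$ modulo every prime. Writing $t=\prod_i R_i^{e_i}$ with the $e_i$ not all $\equiv 0\pmod 3$, I note that $t$ has order $3$ and that its reduction mod $\ell$ is non-central in ${\rm{Sp}}(2n,\Z/\ell)$ for every prime $\ell$. Indeed $\det\bigl((R_i-I)|_{V_i}\bigr)=3$, so for $\ell\neq 3$ the reduction of $t$ again has order $3$ and cannot equal $\pm I$, while for $\ell=3$ the matrix $R_i$ becomes a non-trivial unipotent (a symplectic transvection, of order $3=\mathrm{char}$), so $t$ reduces to a non-trivial unipotent element, again distinct from $\pm I$. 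Since ${\rm{PSp}}(2n,\Z/\ell)$ is simple for all $\ell$ once $n\geq 3$ (and for all odd $\ell$ when $n=2$), and ${\rm{Sp}}(2n,\Z/\ell)$ is perfect in these ranges, the normal closure of the image of $t$ is all of ${\rm{Sp}}(2n,\Z/\ell)$; that is, $\la t\ra$ surjects onto ${\rm{Sp}}(2n,\Z/\ell)$.

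Now I would invoke the two structural features of $\Sp$ available for $n\geq 2$: Margulis' normal subgroup theorem and the congruence subgroup property. Since $t$ has order $3$ it is not in the centre $\{\pm I\}$, so $\la t\ra$ is a non-central normal subgroup and hence of finite index; the quotient $Q=\Sp/\la t\ra$ is therefore finite, and by the congruence subgroup property it is a quotient of some ${\rm{Sp}}(2n,\Z/m)$. The non-abelian composition factors of ${\rm{Sp}}(2n,\Z/m)$ are the groups ${\rm{PSp}}(2n,\Z/\ell)$ with $\ell\mid m$, and the surjectivity just established (together with strong approximation, which makes surjections onto these simple groups unique up to automorphism, so that the non-trivial image of $t$ obstructs each of them) shows that $Q$ admits none of them as a quotient. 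For $n\geq 3$ this finishes the argument: $\Sp^{\mathrm{ab}}=0$ forces $Q$ to be perfect, and a finite perfect group with no non-abelian simple quotient is trivial, so $\la t\ra=\Sp$. For $n=2$ one notes that $\la t\ra$ still surjects onto the simple group $A_6=[{\rm{Sp}}(4,\Z/2),{\rm{Sp}}(4,\Z/2)]$, since the image of $t$ is an order-$3$ element of $\Sigma_6\cong{\rm{Sp}}(4,\Z/2)$, so again $Q$ has no non-abelian simple quotient and $Q^{\mathrm{ab}}$ is a quotient of $\Z/2$.

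I expect the main obstacle to lie precisely in the low-dimensional case $n=2$, and more generally in eliminating the residual solvable part of $Q$. Ruling out non-abelian simple quotients still leaves open the possibility that $Q$ is a non-abelian solvable group built from the congruence layers $\ker\bigl({\rm{Sp}}(2n,\Z/\ell^{a})\to{\rm{Sp}}(2n,\Z/\ell^{a-1})\bigr)$; for odd $\ell$ these layers are irreducible modules and the relevant extensions are non-split, so a normal subgroup surjecting onto ${\rm{Sp}}(2n,\Z/\ell)$ must be the whole group, but at $\ell=2$ (the prime responsible for $\Sp^{\mathrm{ab}}=\Z/2$ when $n=2$) this requires a direct $2$-adic computation. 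This is the point at which the uniform argument must be supplemented by a special low-dimensional analysis, verifying by hand that $\la t\ra$ actually contains the commutator subgroup (equivalently that $\Sp/\la t\ra\cong\Z/2$) rather than merely having the correct simple quotients.
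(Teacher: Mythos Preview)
Your argument for $n\ge 3$ is correct and follows the same route as the paper: Smith theory produces $t\in T$ in the kernel, Margulis and the congruence subgroup property push the action down to ${\rm Sp}(2n,\Z/m)=\prod_i{\rm Sp}(2n,\Z/p_i^{e_i})$, and the composition-factor analysis together with the uniqueness of the kernel of any surjection onto ${\rm PSp}(2n,p)$ gives the contradiction. A minor organisational difference: the paper, having passed to ${\rm Sp}(2n,\Z/p^e)$, re-applies Corollary~\ref{trivial} there (using that $T$ injects into the finite quotient) rather than tracking the original $t$; your version, checking directly that $t$ stays non-central modulo every prime, is closer to the general Proposition~\ref{Normal} that the paper proves afterwards. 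Either way the perfectness of ${\rm Sp}(2n,\Z/p^e)$ for $(n,p)\neq(2,2)$ disposes of the solvable residue, so your worry about congruence layers at odd primes is unnecessary even when $n=2$.

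Where you leave a gap is exactly where the paper needs an extra ingredient, namely $n=p=2$. But the paper does not close it via the $2$-adic analysis of congruence layers that you anticipate. Instead it exhibits a single explicit element of order $5$ in ${\rm Sp}(4,\Z)$,
\[
\begin{pmatrix}0&1&0&0\\-1&0&1&-1\\0&1&0&1\\0&0&-1&1\end{pmatrix},
\]
whose normal closure has index $2$ (verified via the surjection from the genus-$2$ mapping class group, citing Korkmaz--Harvey). Since an element of order $5$ dies in every $2$-group quotient, the only non-trivial $2$-group quotient of ${\rm Sp}(4,\Z)$ is $\Z/2$; this immediately eliminates the residual solvable possibility you were worried about and finishes the $n=2$ case without any structural computation in the congruence kernels.
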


\begin{proof}    
Let  $\rho\colon\Sp\to{\rm{Homeo}}(X)$ be an action of $\Sp$ on $X$ and
let  $T\cong (\Z/3)^n$ be the subgroup defined above. By Corollary~\ref{trivial}, there is a non-trivial element $t\in T$ in the kernel of $\rho$.

The center of $\Sp$ is $\pm I$, and is not contained in $T$.  By the Margulis normal subgroup theorem, any normal subgroup of $\Sp$ is either central or has finite index.  Since the kernel of $\rho$ contains a non-central element, it must have finite index.  
$\Sp$ has the congruence subgroup property, so  the kernel of  $\rho$   contains a principal congruence subgroup $\Gamma_m$ for some $m$.   
Thus our action factors through the quotient $\Sp/\Gamma_m\cong {\rm{Sp}}(2n,\Z/m)$, and it suffices to show that any action of ${\rm{Sp}}(2n,\Z/m)$ is trivial.  

Now ${\rm{Sp}}(2n,\Z/m)\cong \prod_j{\rm{Sp}}(2n,\Z/{p_i^{e_i}})$, where $p_i$ runs over the prime divisors of $m$ (see, e.g., \cite{N} for an elementary proof of this).  If an action of ${\rm{Sp}}(2n,\Z/m)$
 is non-trivial, then its restriction to some ${\rm{Sp}}(2n,\Z/{p_i^{e_i}})$ must be non-trivial. So it is enough to prove
the theorem with
$\Sp$ replaced by ${\rm{Sp}}(2n,\Z/{p_i^{e_i}})$. 

So now we assume that $\rho$ is an action of ${\rm{Sp}}(2n,\Z/p^e)$  on $X$, and we let
$Q\leq{\rm{Homeo}}(X)$ denote its image.  $T$ maps injectively to ${\rm{Sp}}(m,\Z/p^e)$, so we
identify it with its image. As above,  some non-trivial $t\in T$  acts trivially on $X$. 

The kernel of the natural map from ${\rm{Sp}}(2n,\Z/p^e)$ onto ${\rm{Sp}}(2n,\Z/p)$ is a $p$-group. 
%(If a matrix $A$ is in this kernel then $A$ is of the form $I + pB$, and for $1\leq k\leq p^{e-1}$ the coefficient  of $B^k$ in $(I+pB)^{p^{e-1}}$
%is $p^k{p^{e-1}\choose{k}}$, which  is divisible by $p^e$; thus  the order of $A$ divides $p^{e-1}$).
Except in the case $n=p=2$ the quotient ${\rm{PSp}}(2n,p)={\rm{Sp}}(2n,\Z/p)/\{\pm I\}$ is simple, so the composition factors of ${\rm{Sp}}(2n,\Z/p^e)$ are ${\rm{PSp}}(2n,p)$, $\Z/2$ and $\Z/p$.  Furthermore (except for ${\rm Sp}(4,2)$) ${\rm{Sp}}(2n,\Z/p^e)$  is perfect; thus if $Q$ is non-trivial it must map onto ${\rm PSp}(2n,p)$.  But any two maps ${\rm{Sp}}(2n,\Z/p^e)\onto {\rm{PSp}}(2n,p)$ have the same kernel, which does {\it not} contain $t$.   This contradicts the fact that the image of $t$ in $Q$ is trivial.

If $n=p=2$, then we must deal with the possibility that the image of 
$\rho$ is a non-trivial $2$-group. A convenient way to do this is to note that the following
element of order $5$ in ${\rm{Sp}}(4,\Z)$ has normal closure of index $2$:
$$
\begin{pmatrix}  
      0& 1& 0& 0\\
      -1& 0& 1& {-1}\\
      0& 1& 0& 1\\
     0& 0& -1& 1\\
 \end{pmatrix}
$$ 
It follows that the only non-trivial
finite 2-group onto which ${\rm{Sp}}(4,\Z)$ can map is $\Z/2$.

(This matrix is the image of an element $\mu$ of order $10$ in the mapping class group ${\rm{Mod}}_2$ of a genus 2 surface, under the natural map ${\rm{Mod}}_2\to {\rm{Sp}}(4,\Z)$.  We refer the reader to \cite{HK} for a  geometric argument  that  the normal closure of  $\mu$ in ${\rm{Mod}}_2$ has index $2$.)
\end{proof}

\section{A General Argument}\label{s:lemma}

The argument for ${\rm{Sp}}(2n,\Z)$ used several features of this group which are common in higher-rank arithmetic groups.   ${\rm{Sp}}(2n)$ is an example of a group scheme defined over $\Z$, and it  is convenient to use this language to phrase a general question about actions of such groups on spheres and Euclidean spaces.

Recall that an {\it affine algebraic group scheme $\GG$ defined over $\Z$} is a functor from the category of rings to groups which is {\it represented} by some finitely-generated ring $A$; this means that  there is a natural bijection between $\GG(R)$ and ${\rm{Hom}}(A,R)$ for any ring $R$.  
As a simple example, consider the functor $\GG={\rm{SL}}_2$; it is an affine group scheme over $\Z$ represented by the ring   $A=\Z[x_1,x_2,x_3,x_4]/\langle x_1x_4-x_2x_3-1\rangle$.  For a  very readable introduction to affine group schemes, see \cite{Waterhouse}.

\begin{question} Given an affine algebraic group scheme $\GG$ defined over $\Z$ and a ring $\Oo_S$ of $S$-integers  in a number field $k,$ what is the smallest dimensional sphere (or Euclidean space) on which $\GG(\Oo_S)$ can act? In particular, are there examples of actions below the
first dimension in which  $\GG(\Oo_S)$ has a non-trivial linear representation over $\R$?
\end{question}

The answer to this question will depend on both the group scheme $\GG$ and on  $\Oo_S$.  In our answer for $\GG={\rm{Sp}}(2n)$ and $\Oo_S=\Z$ we used Smith theory to find a non-central element in ${\rm{Sp}}(2n,\Z)$ which acted trivially; this allowed us to reduce the problem to  actions of ${\rm{Sp}}(2n,\Z/p^e)$.  We then began over, with a Smith theory argument for ${\rm{Sp}}(2n,\Z/p^e)$.  Instead of starting over, we could have hypothesized the existence of one element $t\in{\rm{Sp}}(2n,\Z)$ which acts trivially and which projects to a non-central element in each ${\rm{Sp}}(2n,\Z/p^e)$.  A slightly more complicated argument, which we give below, can then be used to show that  $t$ normally generates ${\rm{Sp}}(2n,\Z)$ so the whole action is trivial.  The advantage of this  new  argument is that it applies whenever   $\G=\GG(\Oo_S)$   enjoys the following properties:

\begin{enumerate}

\item {\bf  Normal Subgroup Property.} 
Every normal subgroup of $\G$ is either central or has finite index 
 
\item  {\bf Weak Congruence Subgroup Property.}  For every finite-index normal subgroup $K<\G$,
the pre-image in $\G$ of the centre of $\G/K$
 contains a principal congruence subgroup $\G(\A)=\ker (\G\to \GG(\Oo_S/\A))$.  

\item  {\bf Very Strong Approximation.}  The map $\G\to \GG(\Oo_S/\A)$ is surjective for all ideals $\A$.  

\item {\bf Quasi-Solvable Quotients.}  
For every prime ideal $\P$ in $\Oo_S$ the quotient of $\GG(\Oo_S/\P)$ by its centre is centreless with only solvable proper quotients
(e.g. simple).

\end{enumerate}

\begin{proposition}\label{Normal}  Let $\G$ satisfy properties (1)-(4) above and
suppose that $\G$ contains an element  $t$ whose image in $\GG(\Oo_S/\P)$ is non-central for every prime ideal $\P$.
Then the quotient of $\G$ by the normal closure of $t$ is finite and solvable. In particular, if $\G$
is perfect then the normal closure of $t$ is equal to $\G$.
\end{proposition}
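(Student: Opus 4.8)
The plan is to argue by contradiction, using properties (1)--(4) to force any nonsolvable quotient of $\G$ to survive into a product of the groups $\GG(\Oo_S/\P)$, where the hypothesis on $t$ yields a contradiction. Write $N=\la t\ra$ for the normal closure. Since the image of $t$ in $\GG(\Oo_S/\P)$ is non-central and $\G\onto\GG(\Oo_S/\P)$ by Very Strong Approximation, $t$ is itself non-central in $\G$; hence $N$ is a non-central normal subgroup, and the Normal Subgroup Property forces $N$ to have finite index. Thus $G:=\G/N$ is finite, and it remains only to prove that $G$ is solvable.

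Suppose not, and let $H=G/\mathrm{Rad}(G)$ be the quotient by the solvable radical. Then $H$ is a nontrivial finite quotient of $\G$ that is nonsolvable, has trivial solvable radical and hence trivial centre; moreover the image of $t$ in $H$ is trivial, since $t\in N=\ker(\G\to G)$. I would apply the Weak Congruence Subgroup Property to the finite-index normal subgroup $K=\ker(\G\to H)$: as $H$ is centreless, the preimage of the centre of $H$ is $K$ itself, so $K\supseteq\GA$ for some ideal $\A$. Hence $\G\to H$ factors through $\G/\GA$, which by Very Strong Approximation is $\GG(\Oo_S/\A)$, and by the Chinese Remainder Theorem together with the fact that $\GG$ sends products of rings to products of groups this is $\prod_i\GG(\Oo_S/\P_i^{e_i})$, where $\A=\prod_i\P_i^{e_i}$.

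Next I would strip off the solvable part of each factor. The kernel of the reduction $\GG(\Oo_S/\P_i^{e_i})\to\GG(\Oo_S/\P_i)$ is a nilpotent $p_i$-group coming from the congruence filtration, and the centre $C_i$ of $\GG(\Oo_S/\P_i)$ is abelian, so the kernel $D_i$ of $\GG(\Oo_S/\P_i^{e_i})\to L_i:=\GG(\Oo_S/\P_i)/C_i$ is solvable. Because $H$ has trivial solvable radical, the image of the solvable normal subgroup $\prod_i D_i$ in $H$ is trivial, so $H$ is already a quotient of $\prod_i L_i$; by Quasi-Solvable Quotients each $L_i$ is centreless with only solvable proper quotients. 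Now I track $t$: its image in $\prod_i L_i$ is a tuple $u=(\bar t_i)_i$, and $\bar t_i\neq 1$ for every $i$ precisely because $t$ is non-central modulo each $\P_i$. Since the composite $\prod_i L_i\to H$ kills $u$, the normal subgroup $\ker(\prod_i L_i\to H)$ contains the element $u$, all of whose coordinates are nontrivial.

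The crux, and the step I expect to be the main obstacle, is to show that this is impossible unless $H$ is solvable. When the $L_i$ are simple (the principal case), projecting the normal closure $\la u\ra$ to the $i$-th factor gives the normal closure of $\bar t_i\neq 1$ in $L_i$, which is all of $L_i$ by simplicity; thus $\la u\ra$ is a subdirect product, and since every normal subgroup of a direct product of nonabelian simple groups is a sub-product, a subdirect normal subgroup must be the whole product. Hence $\la u\ra=\prod_i L_i$, so $H$ is trivial, contradicting its nonsolvability. For the general ``only solvable proper quotients'' hypothesis I would replace simplicity by the fact that a nonsolvable such $L_i$ has a unique minimal normal subgroup $S_i$, a product of nonabelian simple groups, with $L_i/S_i$ solvable, and run the same analysis on the socle $\prod_i S_i$; making this variant fully rigorous is where the real work lies. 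Finally, since $G=\G/N$ is finite and solvable, if $\G$ is perfect then so is $G$, forcing $G=1$ and $N=\G$.
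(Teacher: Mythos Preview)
Your argument in the principal case where each $L_i=\GG(\Oo_S/\P_i)/Z_i$ is simple is correct and complete, and it proceeds by a genuinely different route from the paper. You first pass to a centreless quotient $H=(\G/N)/\mathrm{Rad}(\G/N)$, then use properties (2)--(3) and the vanishing of the solvable normal subgroup $\prod_i D_i$ to obtain an epimorphism $\prod_i L_i\onto H$, and finally invoke the structure of normal subgroups of a direct product of nonabelian simple groups: since the kernel contains an element with every coordinate nontrivial, its normal closure surjects onto each factor, hence equals the whole product, forcing $H=1$. All of the auxiliary steps (solvability of $D_i$, non-triviality of each $\bar t_i$, the ``sub-product'' lemma) are correct. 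The paper instead works directly with $Q=(\G/K)/Z(\G/K)$ and does not use the normal-subgroup structure of products of simple groups at all.

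The gap you flag in the general ``only solvable proper quotients'' case is real, and the paper closes it with a different and much shorter device. The key observation is: if $\phi\colon G_1\times G_2\to A$ is an epimorphism with $\phi(g_1,g_2)=1$, then $\phi(g_1,1)=\phi(1,g_2)^{-1}$ commutes with $\phi(G_2)$ (from the left-hand expression) and with $\phi(G_1)$ (from the right-hand one), so it is central in $A$. Applied factor by factor, this shows that in the relevant quotient the image of each component $t_i$ is \emph{central}, not merely trivial-after-further-quotient. Since $t_i$ is non-central in $\GG(\Oo_S/\P_i)$, the induced map from $L_i$ has nontrivial kernel once one mods out the centre of the image, so property~(4) forces that image to be solvable; the whole quotient is then solvable. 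This centrality trick works uniformly under the weaker hypothesis and avoids any analysis of socles or minimal normal subgroups; your socle approach can probably be completed, but it is substantially more work than the paper's one-line observation.
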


\begin{proof} Let $K$ be the subgroup of $\G$ normally generated by $t$,
let  $N$ be the pre-image in $\G$ of the centre of $\G/K$, and let $f\colon \G\onto  \G/N := Q$
be the quotient map. 
 
Since $N$ contains the non-central element $t$, it has finite index, by $(1)$.   
 The Congruence Subgroup Property (2) tells us that  $N$ 
 must contain a congruence subgroup $\GA=\ker (\G\to \GG(\Oo_S/\A))$, and   (3) tells us that $\G\to \GG(\Oo_S/\A)$ is onto. Thus $f$ induces an epimorphism
$$\overline f\colon  \GG(\Oo_S/\A)\onto Q.$$

Since $\Oo_S$ is a Dedekind domain, $\A$ factors uniquely as a product of powers of prime ideals,
say $\A=\prod \P_i^{e_i}$. By the Chinese Remainder Theorem,
 $\GG(\Oo_S/\A)= \prod \GG(\Oo_S/\P_i^{e_i})$. We will be done if we can argue that the restriction of $\overline f$
 to each $\GG(\Oo_S/\P_i^{e_i})$ has solvable image.
 
An elementary calculation shows that the kernel  of the natural map
$\GG(\Oo_S/\P^e)\to\GG(\Oo_S/\P)$ is a $p$-group, where $p$ is the characteristic of $\Oo_S/\P$. Thus the kernel of $$\prod \GG(\Oo_S/\P_i^{e_i})
\to \prod \GG(\Oo_S/\P_i)$$ is a nilpotent group, which we denote $P$.

Now consider the following commutative diagram of epimorphisms:
$$
\begin{matrix}
\prod\GG(\Oo_S/\P_i^{e_i})&\xrightarrow{\overline{f}}&Q\\
\downarrow &&\downarrow\\
\prod \GG(\Oo_S/\P_i)& \xrightarrow{} & Q/\overline{f}(P) 
\end{matrix}
$$
 Let $(t_1,\dots,t_n)$ denote the image of $t$ in $\prod \GG(\Oo_S/\P_i)$. By hypothesis, each $t_i$ is non-central
 in $\GG(\Oo_S/\P_i)$.
On the other hand, the image of $t$ in $Q$ is trivial, by hypothesis, so the image of each $t_i$ in
$Q/\overline{f}(P)$ is central by the virtue of the following elementary observation:

\smallskip
{\em{Let $\phi:G_1\times G_2\to A$ be an epimorphism of groups. If $\phi(g_1,g_2)=1$
then $\phi(g_1,1)$ and $\phi(1,g_2)$ are central in $A$.}}

\smallskip

Property (4) now implies that  the image of $\GG(\Oo_S/\P_i)$ in $Q/\overline{f}(P)$ is  solvable,
and hence $Q/\overline{f}(P)$, a product of commuting solvable groups, is solvable.
Since $\overline{f}(P)$ is nilpotent, we conclude that $Q$ is solvable and hence so is
its central extension $\G/K$, as required.
\end{proof}

We combine this proposition with Theorem~\ref{Smith} to obtain restrictions on the possible actions of $\G$.

\begin{theorem}\label{t:perfect}
Let $p$ be an odd prime,   let  $\Gamma=\GG(\Oo_S)$, as above, be a perfect
group with properties (1)-(4), and consider an action
of $\G$ on a generalized $m$-dimensional $\Z/p$-sphere or  $(m + 1)$-dimensional   $\Z/p$-acyclic homology manifold over $\Z/p$. Suppose that $\Gamma$ contains an elementary abelian $p$-group $T$ of rank $\lfloor m/2\rfloor+1$ whose projection to $\GG(\Oo_S/\P)$ intersects the centre trivially for each
prime ideal $\P$ in $\Oo_S$.  Then the action is trivial. 
\end{theorem}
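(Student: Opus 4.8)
The plan is to route one Smith-theoretic element through Proposition~\ref{Normal}. Let $\rho\colon\G\to\mathrm{Homeo}(X)$ be the given action and write $d=\lfloor m/2\rfloor+1$ for the rank of $T$. The first step is purely topological: since $m<2d-1$, Theorem~\ref{Smith} prohibits an effective action of $(\Z/p)^d$ on a generalized $\Z/p$-homology sphere of dimension $m$, and equally on an $(m+1)$-dimensional $\Z/p$-acyclic homology manifold. Consequently $\rho|_T$ is not injective, and I may fix a non-trivial element $t\in T$ lying in $\ker\rho$.

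The second step, and the real point, is to verify that $t$ satisfies the hypothesis of Proposition~\ref{Normal}, namely that its image in $\GG(\Oo_S/\P)$ is non-central for every prime ideal $\P$. Here the hypothesis on $T$ does the work: because the image of $T$ in $\GG(\Oo_S/\P)$ meets the centre only in the identity, a non-trivial element of $T$ can reduce to a central element modulo $\P$ only if it reduces to the identity. Thus it suffices to know that the reduction of $t$ mod $\P$ is non-trivial for every $\P$. Reduction is injective on the finite group $T$ for all but finitely many $\P$ (the deep principal congruence subgroups $\G(\A)$ are torsion-free), so the only thing to arrange is that $t$ avoids the finitely many reduction kernels $T_\P=\ker(T\to\GG(\Oo_S/\P))$ that happen to be non-trivial; in the concrete cases of interest reduction is in fact injective on $T$ for every $\P$, so any non-trivial $t$ will serve. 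Granting this, $t$ projects to a non-central element of each $\GG(\Oo_S/\P)$.

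The final step is formal. Proposition~\ref{Normal} now applies to $t$: the quotient of $\G$ by the normal closure $K=\la t\ra$ is finite and solvable, and since $\G$ is perfect this forces $K=\G$. But $t\in\ker\rho$ and $\ker\rho$ is a normal subgroup, so $\ker\rho\supseteq K=\G$; that is, $\rho$ is trivial, as claimed.

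I expect the \textbf{main obstacle} to be the middle step: passing from the collective hypothesis on $T$ (its image meets the centre trivially in \emph{each} reduction) to the single-element hypothesis of Proposition~\ref{Normal} (one $t$ that is simultaneously non-central in \emph{every} reduction). Everything else amounts to bookkeeping around two black boxes, namely Theorem~\ref{Smith} for the existence of a trivially-acting element and Proposition~\ref{Normal} together with perfectness for normal generation. One should also take minimal care with the dimension count $m<2d-1$, so that the Smith bound is genuinely available for the given rank of $T$.
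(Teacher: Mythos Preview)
Your proof follows exactly the paper's route: invoke Theorem~\ref{Smith} to obtain a non-trivial $t\in T$ in $\ker\rho$, then feed that $t$ into Proposition~\ref{Normal} and use perfectness. The paper's own proof is two sentences and treats the passage from the hypothesis on $T$ to the hypothesis on $t$ as automatic.

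On your flagged ``main obstacle'': the authors evidently intend the hypothesis ``the projection of $T$ to $\GG(\Oo_S/\P)$ intersects the centre trivially'' in the strong sense that no non-identity element of $T$ maps into the centre (equivalently, $T\to \GG(\Oo_S/\P)/Z$ is injective). Under that reading, \emph{any} non-trivial $t\in T$ produced by Smith theory immediately satisfies the hypothesis of Proposition~\ref{Normal}, and your worry about having to \emph{choose} $t$ to avoid finitely many reduction kernels disappears. Your weaker reading (image$(T)\cap Z=\{1\}$, with injectivity not assumed) would indeed leave a gap, since Smith theory gives no control over which $t$ acts trivially; but that is not how the paper reads or applies the statement, and in the motivating example ${\rm Sp}(2n,\Z)$ the subgroup $T$ is checked to inject modulo every prime.
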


\begin{proof}  Theorem~\ref{Smith}  gives us an element $t\in T\subset\Gamma$ which acts trivially.  Then Proposition~\ref{Normal} allows us to conclude that the entire group $\Gamma$ acts trivially.
\end{proof}

The subgroup $T$ used in our previous proof for ${\rm{Sp}}(2n,\Z)$ satisfies the hypothesis of Theorem~\ref{t:perfect}, so this gives a slightly different proof for 
${\rm{Sp}}(2n,\Z)$ when $n\ge 3$.

\section{Arithmetic groups}\label{s:lattices}

 Let $G$ be an algebraic group defined over $\Q$.  We fix an embedding $G\hookrightarrow {\rm{GL}}_n$.  This means that we identify $G$ with a set of invertible matrices whose entries satisfy specific polynomial equations with coefficients in $\Q.$ 
 These equations make sense for any field $k$ containing $\Q$, so we can define $G(k)$ to be ${\rm{GL}}(n,k)\cap G$.  If $k$ is an algebraic number field with ring of integers $\Oo$ and $S$ a set of valuations,  we may then define $G(\Oo_S)$ to be ${\rm{GL}}(n,\Oo_S)\cap G(k).$

 In this section we discuss conditions on  $G, k$ and $S$  which will guarantee that $\G=G(\Oo_S)$ has the properties we need for Proposition~\ref{Normal}.    We assume throughout that $G$ is connected (in the Zariski topology), simply connected (has no proper \'etale covers), and absolutely simple (i.e. simple when considered as an algebraic group over the algebraic closure of the defining field). Examples of such groups include the classical special linear and symplectic groups, as well as commutator subgroups of the orthogonal groups.  We also assume that  $\G$ is infinite throughout.
  
\subsection{Normal subgroups} The first property in the list   from Section~\ref{s:lemma}  is the Normal Subgroup Property, which was established for a large class of algebraic groups by Margulis \cite{Margulis}.  In particular, under the conditions fixed above, the Normal Subgroup Property holds for $\G$ as long as the dimension of a maximal $k$-split torus in $G$ is at least 2 ($rank_k(G)\geq 2$) (\cite{Margulis},  quoted in \cite{PR} as Theorem 9.9).

\subsection{Congruence Subgroups}
Sufficient conditions for ensuring Property (2) (the Congruence Subgroup property) are given at length in \cite{PR}, chapter 9.  Our standing assumptions on $G$ are sufficient together with:  
\begin{itemize}
\item $G$ has $k$-rank at least 2,
\item if $G$ is one of $B_2$ or $G_2$ then $S$ contains all prime divisors of $2$ with residue degree 1.
\end{itemize}

 \subsection{Strong Approximation}
The third property on our list  is the most problematic.     For $G={\rm{SL}}_n$ the statement is classical and can be found, for example, in Bourbaki (\cite{Bourbaki}, Chapter VII, section 2, part 4 on Dedekind domains).  The proof uses the fact that ${\rm{SL}}(n,\Oo_S)$ and ${\rm{SL}}(n,\Oo_S/\A)$  are generated by elementary matrices $I+\lambda E_{i,j}$ together with the Chinese Remainder Theorem.   

For most linear algebraic groups $G$ one cannot expect $\G\to G(\Oo_S/\A)$ to be surjective for all $\A$.  One reason is that the equations defining $G$  may change drastically when reduced modulo a prime, or even become completely trivial; in fact we should write $G^{(\A)}(\Oo_S/\A)$ to indicate that we are considering the defining equations for $G$ modulo $\A$ in the image.  However, in our proof we cannot afford to exclude any primes because we have no control over which ideal $\A$ will be given to us by the  Congruence Subgroup Property.    It turns out that it is enough to  assume that 
\begin{itemize}
\item For every prime $\P$ in $\Oo_S$, the map $G(\Oo_\P)\to G^{(\P)}(\Oo/\P)$ is surjective.
\end{itemize}
(Here $\Oo_\P$ is the localization of $\Oo$ at the prime $\P$, i.e. the ring obtained from $\Oo$ by inverting all primes except $\P$, and $\Oo/\P=\Oo_S/\P=\Oo_\P/\P$.)  This assumption is satisfied, for example, by all of the Chevalley groups (see \cite{Carter}).

Under this assumption, Strong Approximation implies the needed surjectivity.  This argument goes as follows. The Strong Approximation as stated in \cite{PR}, Theorem 7.12, can be rephrased as: 

\begin{theorem}\label{thm:SA}   Let $\P_{1},\ldots,\P_{r}$ be primes not in $S$.
Let $e_{i}\in\mathbb{N}$ and put 
\[
K_{i}=\left\{  g\in G(\Oo_{\P_{i}})\mid g\equiv I_{n}~(\operatorname{mod}\P_{i}^{e_{i}})\right\}  .
\]
Embed $\G$ diagonally in $G(\Oo_{\P_{1}})\times
\cdots\times G(\Oo_{\P_{r}})$. Then 
\[
\G\cdot(K_{1}\times\cdots\times K_{r})=G(\Oo_{\P_{1} })\times\cdots\times G(\Oo_{\P_{r}}).
\]
\end{theorem}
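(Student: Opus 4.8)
The plan is to obtain this finite statement as a purely formal translation of the adelic density furnished by Strong Approximation; the substantive input, Theorem~7.12 of \cite{PR}, is taken as given, and the work consists only of topological bookkeeping.

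First I would reformulate the claimed equality. Both $\G$, embedded diagonally, and each $K_i$ lie inside $\prod_i G(\Oo_{\P_i})$, so the inclusion $\G\cdot(K_1\times\cdots\times K_r)\subseteq\prod_i G(\Oo_{\P_i})$ is automatic; the content is the reverse inclusion. Equip each $G(\Oo_{\P_i})$ with its $\P_i$-adic topology and the product with the product topology. Then $K_i$ is the kernel of reduction modulo $\P_i^{e_i}$ onto the finite discrete group $G(\Oo/\P_i^{e_i})$, hence an open subgroup of finite index, and $\prod_i K_i$ is an open finite-index subgroup of $\prod_i G(\Oo_{\P_i})$ with quotient $\prod_i G(\Oo/\P_i^{e_i})$. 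The asserted equality says exactly that the diagonal image of $\G$ meets every coset of $\prod_i K_i$, equivalently that $\G$ surjects onto the finite group $\prod_i G(\Oo/\P_i^{e_i})$; this is a special case of the assertion that the image of $\G$ is dense in $\prod_i G(\Oo_{\P_i})$.

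Next I would produce that density from Strong Approximation. Under the standing hypotheses ($G$ simply connected and absolutely simple, together with $S$-isotropy, which is guaranteed here by $\G$ being infinite), Theorem~7.12 of \cite{PR} asserts that $G(k)$ is dense in the $S$-adele group $G(\mathbb{A}_S)=\prod'_{v\notin S}G(k_v)$, the restricted product being taken with respect to the compact open subgroups $G(\hat\Oo_v)$. The subset $U=\prod_{v\notin S}G(\hat\Oo_v)$ is an open subgroup of $G(\mathbb{A}_S)$, and it meets $G(k)$ in precisely $\G=G(\Oo_S)$, since an $S$-integral point is exactly a rational point that is integral at every place outside $S$. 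I would then invoke the elementary fact that a dense subgroup of a topological group meets every open set, so its intersection with any open subgroup $U$ is dense in $U$; applied to $G(k)$ dense in $G(\mathbb{A}_S)$, this shows $\G$ is dense in $U$.

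Finally I would project onto the finite subproduct indexed by $\P_1,\dots,\P_r$. This projection $U\onto\prod_i G(\hat\Oo_{\P_i})$ is continuous and open, so it carries the dense subgroup $\G$ to a dense subgroup of $\prod_i G(\hat\Oo_{\P_i})$. The passage between completions and the localizations $\Oo_{\P_i}$ appearing in the statement is harmless, because the relevant finite quotients coincide: $\Oo_{\P_i}/\P_i^{e_i}=\Oo/\P_i^{e_i}=\hat\Oo_{\P_i}/\hat\P_i^{e_i}$. Hence density gives surjectivity of $\G$ onto $\prod_i G(\Oo/\P_i^{e_i})$, which by the first step is the desired equality. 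The main obstacle is not computational but conceptual: one must verify that the hypotheses of the cited Strong Approximation theorem genuinely hold in our setting—above all $S$-isotropy—and maintain a clean dictionary between the restricted-product (adelic) picture and the finite-level congruence picture. Once that dictionary is fixed, the deduction is formal.
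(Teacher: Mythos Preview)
Your proposal is correct and aligned with the paper's treatment: the paper does not give an independent proof of this theorem at all, but simply introduces it as a rephrasing of Theorem~7.12 of \cite{PR}. What you have written is precisely the topological bookkeeping that justifies the word ``rephrased''---reducing the finite-level equality to surjectivity onto $\prod_i G(\Oo/\P_i^{e_i})$, and extracting that surjectivity from adelic density by intersecting with the open compact $\prod_v G(\hat\Oo_v)$ and projecting---so you are supplying the details the paper elides rather than taking a different route.
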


When $G={\rm{SL}}_n$, $\Oo=\mathbb Z$, $S=\emptyset$ and we have only one prime $p$,  
the theorem reduces to the statement ${\rm{SL}}(n,\Z)\cdot K={\rm{SL}}(n,\Z_p)$, i.e. every determinant 1 matrix with entries in $\Z_p$ can be written as a product of a matrix with entries in $\Z$ times a matrix  congruent to the identity mod $p^e$ for any natural number $e$. 
 
Now let $\mathfrak{a}=\prod\P_{i}^{e_{i}}$ be an arbitrary ideal of $\Oo_S$. It follows that the image of $\Gamma$ in ${\rm{GL}}_n(\Oo/\mathfrak{a})$ is the same as the image of $G(\Oo_{\P_{1}})\times\cdots\times G(\Oo_{\P_{r}})$.   Using the Chinese Remainder theorem and the assumption that the maps $G(\Oo_\P)\to G^{(\P)}(\Oo/\P)$ are surjective, one shows that this latter image is just $G(\Oo/\mathfrak{a})$. (This includes showing
 that the surjectivity of $G(\Oo_\P)\to G^{(\P)}(\Oo/\P)$ implies the surjectivity of  $G(\Oo_\P)\to G^{(\P)}(\Oo/\P^e)$, which
is explained in Lemma 5 on page 393 of \cite{LS} in the case $\Oo_\P=\Z_p$.)

\subsection{Simplicity} 
If $G$ is a Chevalley group and $G\hookrightarrow GL_n$ is the standard representation given by the Chevalley basis (see \cite{Carter2}, Chapter 4), then $G^{(\P)}(\Oo/\P)$ is almost always simple modulo its centre and perfect \cite{Carter}.  The only exceptions are the groups in the following list:
$$ A_1(\mathbb F_2), A_1(\mathbb F_3), ^2\!A_2(\mathbb F_2), B_2(\mathbb F_2), ^2\!B_2(\mathbb F_2), G_2(\mathbb F_2), ^2\!F_4(\mathbb F_2), 
^2\!G_2(\mathbb F_3).$$  

\subsection{Some good groups} 
All of our conditions are satisfied if   
$G$ is a Chevalley group of rank at least $2$, $G\hookrightarrow GL_n$ is the standard representation, and if $G$ is  $B_2$ or $G_2$ then $S$ contains all prime divisors of $2$ with residue degree $1$.
One can extract from the references quoted above that there are also many other groups with the properties we require.

\section{Special linear and symplectic groups} Let $k$ be a number field. 
In this section we restrict to special linear and symplectic groups over rings $\Oo_S$ of $S$-integers.  For $\Oo_S=\mathbb Z$, of course, we have already established optimal bounds for actions on homology spheres and acyclic homology manifolds.

\subsection{Linear actions.}     Let $k$ and $\Oo_S$ be as above. Theorem~\ref{Smith} tells us that
whenever it acts on $\S^{n-2}$ or $\R^{n-1}$,  the subgroup ${\rm{SL}}(n,\Z)\leq {\rm{SL}}(n,\Oo_S)$ must act trivially.  Therefore the kernel of the action contains elements which are not central, and are not central mod $\mathfrak p$ for any prime $\mathfrak p$.  Thus  the entire action is trivial by Proposition~\ref{Normal}.  The same observation shows that ${\rm{Sp}}(2n,\Oo_S)$ cannot act on $\R^{2n-1}$ or $\S^{2n-2}$.  

If $k$ is totally real, then ${\rm{SL}}(n,\Oo_S)$ and ${\rm{Sp}}(2n,\Oo_S)$ embed in $\SL(n, \R)$ and $\SL(2n, \R)$, respectively, so the above
observations provide a complete answer to Question \ref{q:smallest} in these cases.
But if  $k$ is not totally real,   then we have only that
${\rm{SL}}(n,\Oo_S)$ acts on $\C^n$ (real dimension $2n$) and $\S^{2n-1}$; and similarly,
 ${\rm{Sp}}(2n,\Oo_S)$ acts non-trivially on $\R^{4n}$  and $\S^{4n-1}$. Thus, in this case, we have not determined the least dimension of a 
 contractible manifold or sphere on which these groups act non-trivially.

\subsection{Roots of unity.}  Suppose that $\Oo$ contains a  $p$-th root of unity  $\eta$ for some odd prime $p$.  The subgroup
$D$ of ${\rm{SL}}(n,\Oo_S)$ generated by diagonal matrices with powers of $\eta$ on the diagonal is a copy of $(\Z/p)^{n-1}$.   By Theorem~\ref{Smith}, 
whenever ${\rm{SL}}(n,\Oo_S)$ acts on $\R^{2n-3}$ or $\S^{2n-4}$, some element of $D$ acts trivially.  
$D$ intersects the centre of  ${\rm{SL}}(n,\Oo_S)$ if and only if $n$ is divisible by $p$.  Moreover,
if $p$ does not divide $n$, then no element of $D$ become central modulo any prime $\mathfrak p$. 

If $p$ does not divide $n$,  we conclude that any action of ${\rm{SL}}(n,\Oo_S)$ on a Euclidean space of dimension less than $2n-2$ is trivial, i.e. ${\rm{SL}}(n,\Oo_S)$ can't act on $\R^{2n-3}$.    Since there are actions on $\R^{2n}$ and $\S^{2n-1}$, this leaves only two dimensions in which there may or may not be actions.

If $p$ divides $n$,  then ${\rm{SL}}(n,\Oo_p)$ contains a non-central copy of $(\Z/p)^{n-2}$, so ${\rm{SL}}(n,\Oo_p)$ can't act on $\R^{2n-5}$.

\section{Questions}

The Smith theory techniques underlying this note rely on the presence of  $p$-torsion.  To determine the scope  of their
applicability, one needs to address the following natural question:

\begin{question}\label{q:smallest}  Given an affine group scheme $\GG$, a ring of $S$-integers $\Oo_S$, and a rational prime $p$,
 what is the rank of the largest elementary abelian $p$-group in $\GG(\Oo_S)$? 
\end{question}

There are obvious bounds coming from
the observation that the generators of any such elementary abelian $p$-subgroup are all simultaneously diagonalizable over $\C$ (as can be seen easily from considering their Jordan forms), so the rank is at most the dimension   of a maximal torus in $G(\C)$.  For example, in ${\rm{SL}}_n$, it is at most $n-1$.

Smith theory techniques tell us  nothing in the absence of torsion, and little is known about how torsion-free lattices in semi-simple Lie groups 
of higher rank might act on contractible manifolds or spheres. Such problems fit naturally into the context of
the Zimmer Programme \cite{zimmProg}, but the techniques developed until now are not well-adapted to actions by homeomorphisms.

\bibliographystyle{siam}

\end{document}